 \theoremstyle{plain}
 \newtheorem{theorem}{Theorem}
 \newtheorem{corollary}{Corollary}
  \newtheorem{lemma}{Lemma}
 \newtheorem{proposition}{Proposition}
 \theoremstyle{definition}
 \theoremstyle{remark}
 \numberwithin{equation}{section}
\newdimen\plusheight
\def\+{\;\lower\plusheight\hbox{$+$}\;}
\newdimen\minusheight
\def\-{\;\lower\minusheight\hbox{$-$}\;}
\newdimen\cdotsheight
\def\cds{\lower\cdotsheight\hbox{$\cdots$}}
\begin{document}
\title[Convergence Behavior  of  $q$-Continued Fraction on the Unit Circle]
{The   Convergence Behavior of  $q$-Continued Fractions on the Unit
Circle}
\author{Douglas Bowman}
\address{Department Of Mathematical Sciences,
Northern Illinois University,
 De Kalb, IL 60115}
\email{bowman@math.niu.edu }
\author{James Mc Laughlin}
\address{Mathematics Department,
       Trinity College,
       300 Summit Street,
       Hartford, CT 06106-3100}
\email{james.mclaughlin@trincoll.edu}

\keywords{ Continued Fractions,   Rogers-Ramanujan}
\subjclass{Primary:11A55,  Secondary:40A15}
\thanks{The second author's research supported in part by a
Trjitzinsky Fellowship.}
\date{April,   2,  2002}

\begin{abstract}
In a previous paper,  we showed the existence of an uncountable
set of points on the unit  circle at which the
 Rogers-Ramanujan continued fraction does not converge to a finite value.

In this present paper,  we generalise this result to a
wider class of $q$-continued fractions,  a class which includes
the Rogers-Ramanujan continued fraction and the three
Ramanujan-Selberg continued fractions. We show,  for each
$q$-continued fraction,  $G(q)$,  in this class,  that there is an
uncountable set of points,  $Y_{G}$,  on the unit circle such that
if $y \in Y_{G}$ then $G(y)$ does not converge to a finite value.

We discuss the implications
of our theorems for the convergence of other $q$-continued
fractions,  for example the G\"{o}llnitz-Gordon continued
fraction,  on the unit circle.
\end{abstract}
\maketitle
 \section{Introduction}
In a previous paper \cite{BML01} we studied the convergence
behavior of the celebrated Rogers--Ramanujan
 continued fraction, $R(q)$, which is defined
 for $|q|<1$ by
\begin{equation}\label{rreq}
 R(q):= \frac{q^{1/5}}{1}
\+
 \frac{q}{1}
\+
 \frac{q^{2}}{1}
\+
 \frac{q^{3}}{1}
\+\,\cds.
\end{equation}
Put $K(q)=q^{1/5}/R(q)$.

Worpitzky's Theorem (see \cite{LW92}, pp. 35--36) gives that
$R(q)$ converges to a value in $\hat{\mathbb{C}}$
for any $q$ inside the
unit circle. Here $\hat{\mathbb{C}}$ denotes the extended complex plane.
\begin{theorem}(Worpitzky) Let the continued fraction
$K_{n=1}^{\infty}a_{n}/1$
be such that $|a_{n}|\leq 1/4$ for $n \geq 1$. Then
$K_{n=1}^{\infty}a_{n}/1$ converges. All approximants of
the continued fraction lie in the disc $|w|<1/2$ and the value
of the continued fraction is in the disk $|w|\leq1/2$.
\end{theorem}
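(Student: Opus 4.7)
The plan is to prove Worpitzky's theorem by the classical value-region method. Let $s_k(w) := a_k/(1+w)$ denote the linear fractional transformation associated with the $k$-th partial numerator, and write $S_n := s_1 \circ s_2 \circ \cdots \circ s_n$, so that the $n$-th approximant is $f_n = S_n(0)$. The closed disk $V := \{w \in \mathbb{C} : |w| \leq 1/2\}$ will serve as the target value region, and the strategy is to show that $V$ is invariant under every $s_k$ (giving the bound on approximants) and that the nested disks $S_n(V)$ shrink to a point (giving convergence).

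Invariance is a direct calculation: for $w \in V$ one has $|1+w| \geq 1-|w| \geq 1/2$, whence $|s_k(w)| = |a_k|/|1+w| \leq (1/4)/(1/2) = 1/2$; moreover when $|w|<1/2$ the bound $|1+w|>1/2$ improves this to the strict inequality $|s_k(w)| < 1/2$. Since $0 \in V$, chasing these inequalities through the composition $S_n$ from right to left gives $|f_n| < 1/2$ for all $n$. For convergence I would invoke the classical determinant identity
\begin{equation*}
f_n - f_{n-1} = \frac{(-1)^{n-1} a_1 a_2 \cdots a_n}{B_n B_{n-1}},
\end{equation*}
where $A_n, B_n$ are the canonical numerators and denominators, reducing the problem to a lower bound on $|B_n B_{n-1}|$. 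Setting $h_n := B_n/B_{n-1}$, the recurrence $B_n = B_{n-1} + a_n B_{n-2}$ gives $h_n = 1 + a_n/h_{n-1}$, and an easy induction using $|a_n| \leq 1/4$ yields $|h_n| \geq 1/2$.

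The hard part is converting this crude denominator bound into summability of $\sum_n (f_n - f_{n-1})$. The disk-preservation step is sharp---when $|a_k| = 1/4$ the image $s_k(V)$ meets $\partial V$, so no uniform Euclidean contraction is available---and the extremal case $a_n \equiv -1/4$ produces the double root $1/2$ of the characteristic equation $x^2 - x + 1/4 = 0$, giving the explicit denominator $B_n = (n+1)/2^n$ and the slowest possible decay. Nevertheless, combined with $|a_1\cdots a_n| = 4^{-n}$ this still yields $|f_n - f_{n-1}| = O(1/n^2)$, which suffices for absolute convergence. The required refinement can be carried out either by direct analysis of the $B_n$ recurrence (via a substitution such as $B_n = 2^{-n} C_n$ and a polynomial bound on $C_n$), or more elegantly by noting that each $s_k$ is non-expansive in the Poincar\'e metric of $V$ by Schwarz--Pick; either way one concludes $\bigcap_n S_n(V) = \{f\}$ for some $f \in V$, completing the proof.
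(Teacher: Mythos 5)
The paper does not prove this theorem at all: it is quoted as a classical result with a citation to Lorentzen and Waadeland, so there is no internal proof to compare against. Your value-region strategy is the standard textbook route, and the first half of your argument (invariance of $V=\{|w|\le 1/2\}$ under each $s_k$, hence $|f_n|<1/2$ for all approximants) is complete and correct. The problem is that the second half --- the only nontrivial part of Worpitzky's theorem --- is asserted rather than proved. Your induction $|h_n|\ge 1/2$ gives only $|f_n-f_{n-1}|\le 1/2$, which you acknowledge; you then claim $|f_n-f_{n-1}|=O(1/n^2)$ by checking the single extremal case $a_n\equiv -1/4$, which does not establish that this case is worst. Moreover, your proposed ``elegant'' escape via Schwarz--Pick does not close the gap as stated: each $s_k$ maps $V$ into $V$ and is therefore non-expansive in the hyperbolic metric, but non-expansiveness alone is compatible with the limit-circle alternative, i.e.\ with $\bigcap_n S_n(V)$ being a disk of positive radius, so no convergence follows without further work.

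The gap is genuinely fillable by sharpening your own $h_n$ induction. With $h_n=B_n/B_{n-1}=1+a_n/h_{n-1}$ and $h_1=1$, prove by induction that
\begin{equation*}
|h_n|\;\ge\;\frac{n+1}{2n},
\end{equation*}
since $|h_n|\ge 1-\tfrac{1/4}{|h_{n-1}|}\ge 1-\tfrac{1}{4}\cdot\tfrac{2(n-1)}{n}=\tfrac{n+1}{2n}$. Multiplying these bounds gives $|B_n|\ge (n+1)2^{-n}$, which is exactly the extremal value you computed, and then the determinant identity yields
\begin{equation*}
|f_n-f_{n-1}|\;\le\;\frac{4^{-n}}{|B_nB_{n-1}|}\;\le\;\frac{4^{-n}\,2^{2n-1}}{n(n+1)}\;=\;\frac{1}{2n(n+1)},
\end{equation*}
so $\sum(f_n-f_{n-1})$ converges absolutely and the limit lies in the closed disk $|w|\le 1/2$. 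With that one inductive estimate inserted, your proof is complete; without it, the convergence claim rests on an unverified extremality assertion.
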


Outside the unit circle the odd and even parts of $K(q)$ tend to different
limits.
Suppose $|q|>1$.
For $n \geq 1$, define
\[
K_{n}(q):=1+
\frac{q}{1}
\+
 \frac{q^{2}}{1}
\+
 \frac{q^{3}}{1}
\+ \cds \+
 \frac{q^{n}}{1}.
\]
Then
{\allowdisplaybreaks
\begin{align*}
\lim_{j \to \infty}K_{2 j + 1}(q) &= \frac{1}{K(-1/q)},\\
&\phantom{ass}\\
\lim_{j \to \infty}K_{2 j}(q) &= \frac{K(1/q^{4}) }{q}.
\end{align*}
}
This was stated by Ramanujan without proof and proved by Andrews, Berndt,
Jacobson and Lamphere in 1992 (\cite{ABJL92}).

On the unit circle, there are two cases to consider. The easier of these is where $q$
 is a root of unity, in which case $K(q)$ is periodic.
Schur showed in \cite{S17} that if $q$ is a primitive $m$-th
root of unity, where $m \equiv 0$ $(\text{mod}\,\,5)$, then $K(q)$
diverges in the classical sense and if
 $q$ is a primitive $m$-th
root of unity, $m \not \equiv 0 (\text{mod}\,\,5)$, then $K(q)$ converges
and
\begin{equation}\label{E:ScM}
K(q) =
\lambda \, q^{(1 - \lambda \sigma  m )/5} K(\lambda),
\end{equation}
where $\lambda = \left( \frac{m}{5} \right)$ (the Legendre symbol)
and $\sigma$ is the least positive residue of $m \,(\text{mod}\,\, 5)$.
Note that
$K(1)= \phi = (\sqrt{5}+1)/2$, and $K(-1) = 1/ \phi$.

Remark: Schur's result was essentially proved by Ramanujan, probably earlier than
Schur (see \cite{R57}, p.383). However, he made a calculational error (see \cite{Hg97},
p.56).

\textbf{Question:}
 Does the Rogers-Ramanujan  continued fraction converge or diverge,
in either the classical or general sense, at any point on the unit
circle which is not a root of unity? This question had been open
since Schur's 1917 paper until our paper \cite{BML01}. In trying
to prove  convergence in the classical sense to a finite value,
for example, one immediate difficulty is that Schur's theorem
gives that there is a dense set on the unit circle at which $K(q)$
converges and another dense set at which $K(q)$ diverges. This
immediately renders most of the usual convergence/divergence
tests, which rely either on the partial quotients lying in certain
subsets of $\mathbb{C}$ or on the absolute values of the partial
quotients satisfying certain inequalities, useless.

To discuss this
topic we use the following notation. Let the regular continued
fraction expansion of any irrational $t  \in (0, 1)$ be denoted by
$t=[0, e_{1}(t), e_{2}(t),  \cdots]$. Let the $i$-th approximant
of this continued fraction expansion be denoted by
 $c_{i}(t)/d_{i}(t)$. We will sometimes write $e_{i}$ for $e_{i}(t)$,  $c_{i}$ for $c_{i}(t)$ etc,
if there is no danger of ambiguity. Let $ \phi = ( \sqrt{5}+1)/2$.
In    \cite{BML01}, we proved the following theorem.
 \begin{theorem} \label{T:t1}
Let{ \allowdisplaybreaks
 \begin{align} \label{E:seq}
S&= \{t  \in (0, 1): e_{i+1}(t)  \geq   \phi^{d_{i}(t)} \text{
infinitely often} \}. \\ & \phantom{as}  \notag
 \end{align}}
Then $S$ is an uncountable set of measure zero and,  if $t  \in S$
and $y =  \exp (2  \pi i t)$,  then the Rogers-Ramanujan continued
fraction does not converge to a finite value at $y$.
 \end{theorem}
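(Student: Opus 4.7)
The argument splits into the metric claim that $S$ has the stated cardinality and measure, and the dynamical claim about convergence of $K(q) = q^{1/5}/R(q)$ at $y = \exp(2\pi i t)$.

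For the metric part, $\mu(S)=0$ follows from a Borel--Cantelli estimate on the cylinder sets of the Gauss map. On the cylinder determined by fixed values $e_1,\dots,e_n$ (an interval of length $\asymp d_n^{-2}$), the conditional probability that $e_{n+1}\geq N$ is $O(1/N)$. Summing over cylinders,
\[
\mu\bigl(\{t: e_{n+1}(t)\geq \phi^{d_n(t)}\}\bigr) = O\bigl(\phi^{-F_n}\bigr),
\]
where $F_n$ is the $n$-th Fibonacci number (the minimum possible value of $d_n$), and this is summable in $n$. Uncountability follows by building a Cantor-like subset of $S$: with $e_1,\dots,e_i$ already chosen, let $e_{i+1}\in\{\lceil\phi^{d_i}\rceil,\lceil\phi^{d_i}\rceil+1\}$; this produces $2^{\aleph_0}$ distinct irrationals, every one of them in $S$.

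For the divergence part I would compare $K(y)$ with $K$ at the nearby root of unity $y_n := \exp(2\pi i p_n/d_n)$, where $p_n/d_n$ is the $n$-th convergent of $t$. When $e_{n+1}(t)\geq \phi^{d_n(t)}$,
\[
|y-y_n| \leq 2\pi\,|t - p_n/d_n| \leq \frac{2\pi}{d_n d_{n+1}} \leq \frac{2\pi}{d_n\,\phi^{d_n}}.
\]
Schur's theorem pins down $K(y_n)$: either $K(y_n)$ diverges (when $5\mid d_n$), or it is the explicit value given by \eqref{E:ScM}. Along an appropriate subsequence of these $n$, the values $K(y_n)$ do not share a single cluster point in $\hat{\mathbb{C}}$. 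Assuming for contradiction that $K(y)$ converges to some $L\in\hat{\mathbb{C}}$, I would choose $M_n$ of order $d_n$, past the stabilization index of the eventually-periodic sequence $\{K_m(y_n)\}_m$, so that $K_{M_n}(y_n)=K(y_n)$. Regarding $K_{M_n}$ as a rational function in $q$ of degree $O(M_n^2)$ with bounded coefficients, a Lipschitz estimate in a small neighborhood of $y_n$ gives
\[
|K_{M_n}(y) - K_{M_n}(y_n)| \leq C\,M_n^2\,|y-y_n|,
\]
which tends to $0$ because $\phi^{d_n}$ dominates any polynomial factor in $d_n$. Hence $L = \lim_n K(y_n)$, contradicting the presence of several cluster points.

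The hardest step will be making both the Lipschitz estimate and the ``eventually periodic'' statement quantitative: one must control the stabilization point of $\{K_m(y_n)\}_m$ in terms of $d_n$ and avoid indices $M_n$ at which $K_{M_n}$ has a pole close to $y_n$. A secondary technical issue is extracting the desired multiple-cluster-point subsequence; this may require splitting on the residue of $d_n$ mod $5$ and, if necessary, refining the Cantor construction so that each residue class is hit infinitely often among the convergent denominators.
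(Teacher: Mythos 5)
Your metric argument (Borel--Cantelli for measure zero, a two-choice Cantor construction for uncountability) is fine and is not where the content of the theorem lies. The divergence argument, however, takes a fundamentally different route from the paper's, and it has a gap that cannot be repaired within the theorem as stated. The paper (see the proof of the generalized Theorem \ref{T:t3}) never compares $K(y)$ with the \emph{limits} $K(y_n)$ at nearby roots of unity. It uses the determinant formula \eqref{reclema} together with Schur's exact evaluations at a primitive $m$-th root of unity $x_m$ --- e.g.\ for $m\equiv 1\pmod 5$ one has $Q_{m-2}(x_m)=0$, $|Q_{m-1}(x_m)|=1$ and $|\chi_{m-1}(x_m)|=|x_m^{m(m-1)/2}|=1$ (Table \ref{Ta:t5}) --- plus Lipschitz estimates for the \emph{polynomials} $Q_n$ and $\chi_n$ separately, to conclude that $|P_{n}(y)/Q_{n}(y)-P_{n-1}(y)/Q_{n-1}(y)|=|\chi_n(y)|/|Q_n(y)Q_{n-1}(y)|$ is bounded below by an absolute constant for infinitely many $n$. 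That alone kills convergence to a finite value. Your scheme instead requires the sequence $\{K(y_n)\}$ to have at least two cluster points, and this is not available for every $t\in S$: the set $S$ is fixed by the theorem, nothing in its definition controls the residues of $d_n$ or $c_n$ modulo $5$ along the sparse subsequence where $e_{n+1}\geq\phi^{d_n}$, and by \eqref{E:ScM} the values $K(y_n)=\lambda\, y_n^{(1-\lambda\sigma d_n)/5}K(\lambda)$ can perfectly well converge to a single point (e.g.\ if $d_n\equiv 1$ and $c_n$ is eventually constant mod $5$ along that subsequence). In that case your argument yields no contradiction --- indeed it would suggest convergence. ``Refining the Cantor construction'' proves a different, weaker theorem, not this one.

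Two quantitative steps are also incorrect as written. First, there is no ``stabilization index'': when $5\nmid d_n$ the periodic continued fraction at $y_n$ converges because its period transformation is parabolic or loxodromic, so $K_m(y_n)\to K(y_n)$ without ever equaling it; with $M_n$ of order $d_n$ (a bounded number of periods) $K_{M_n}(y_n)$ need not be anywhere near $K(y_n)$, and making it near requires controlling the rate of convergence at $y_n$ uniformly as $d_n\to\infty$ --- a substantial missing piece, and one the paper's method entirely avoids. Second, the bound $|K_{M_n}(y)-K_{M_n}(y_n)|\leq CM_n^2|y-y_n|$ is false: the coefficients of $P_n$ and $Q_n$ are not bounded (already $Q_n(1)=F_{n+1}\sim\phi^n$), so the correct Lipschitz constant for $Q_n$ on the circle is of order $n^2\phi^{n}$, exponential in $n$. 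This is precisely why the threshold $\phi^{d_i(t)}$ appears in the definition of $S$; were a polynomial bound true, a condition like $e_{i+1}\geq d_i^{2}$ would suffice. Moreover, any Lipschitz estimate for the \emph{rational} function $K_{M_n}=P_{M_n}/Q_{M_n}$ degenerates near zeros of $Q_{M_n}$, which genuinely occur at the relevant roots of unity ($Q_{m-2}(x_m)=0$); the paper sidesteps this by estimating $P_n$, $Q_n$, $\chi_n$ individually and only forming the quotient at the last step, where the determinant identity converts the vanishing denominator from a liability into the engine of the proof.
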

We were also able to give explicit examples of points $y$ on the unit circle at
which $K(y)$ did not converge to a finite value.
 \begin{corollary} \label{C:ex}
Let $t$ be the number with continued fraction expansion equal $[0,
e_{1}, e_{2},   \cdots]$,  where $e_{i}$ is the integer consisting
of a tower of $i$ twos with an $i$ an top.
 \begin{multline*}
t=[0, 2, 2^{ \displaystyle{2^{2}}}, 2^{ \displaystyle{2^{
\displaystyle{2^{3}}}}},  \cdots]= \\
0.484848484848484848484848484848484848484848484848484848484 \\
84848484848484848484849277885083112437522992318812011  \cdots
 \end{multline*}
If $y =  \exp (2  \pi i t)$ then $K(y)$ does not converge to a finite value.
 \end{corollary}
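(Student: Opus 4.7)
The plan is to apply Theorem~\ref{T:t1}: it suffices to verify that the given $t$ lies in $S$, i.e.\ that $e_{i+1}(t) \geq \phi^{d_i(t)}$ for infinitely many $i$. Given the tower-like growth of the $e_j$, I expect the inequality in fact to hold for every $i$ past a small threshold, so the proof reduces to two quantitative estimates.

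First I would bound the denominators from above. From the standard recurrence $d_i = e_i d_{i-1} + d_{i-2}$ with $d_0 = 1$, $d_1 = e_1$, an easy induction yields the crude estimate $d_i \leq 2^{i-1} e_1 e_2 \cdots e_i$. Since $\log_2 e_j$ is a tower of $j-1$ twos topped with $j$, while $e_{j-1}$ is a tower of $j-1$ twos topped with $j-1$, we have $e_{j-1} \leq \log_2 e_j$; iterating, $e_1 e_2 \cdots e_{i-1} \leq (\log_2 e_i)^{i-1}$, which is dwarfed by $e_i$ once $i$ is moderately large. Hence $d_i \leq e_i^{1.1}$ (say) for all sufficiently large $i$.

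Next I would estimate $e_{i+1}$ from below. A direct tower comparison --- $\log_2 \log_2 e_{i+1}$ is a tower of $i-1$ twos topped with $i+1$, whereas $\log_2 e_i$ is a tower of $i-1$ twos topped with $i$ --- gives $\log_2 \log_2 e_{i+1} \geq 2 \log_2 e_i$ for all $i \geq 2$, with equality only at $i=2$ and enormous margin thereafter. Exponentiating, $\log_2 e_{i+1} \geq e_i^2$. Combining with the previous step, for $i$ past the threshold,
\[
\log_2 e_{i+1} \geq e_i^2 > e_i^{1.1} \log_2 \phi \geq d_i \log_2 \phi,
\]
which is precisely $e_{i+1} > \phi^{d_i}$. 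Thus $t \in S$, and Theorem~\ref{T:t1} finishes the proof.

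The main obstacle, such as it is, is purely bookkeeping: one must carefully track tower heights and top entries when manipulating the $e_j$, and pin down a clean numeric threshold above which both estimates hold. Since $S$ only requires the inequality $e_{i+1} \geq \phi^{d_i}$ to hold infinitely often, any workable threshold suffices and no analytic subtlety is involved.
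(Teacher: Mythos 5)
Your proposal is correct and is essentially the intended argument: the corollary is an application of Theorem~\ref{T:t1}, and the whole content is the verification that this particular $t$ lies in $S$, which you carry out via the standard bound $d_i \leq 2^{i-1}e_1\cdots e_i$ together with the tower comparisons $e_{j-1}\leq \log_2 e_j$ and $\log_2 e_{i+1}\geq e_i^2$. Your estimates all check out (in fact the inequality $e_{i+1}\geq \phi^{d_i}$ holds for every $i$ beyond a small threshold, which is more than the "infinitely often" that membership in $S$ requires).
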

In  \cite{BML01} we were also able to show the existence of an uncountable set of points
$G$ on the unit circle  such that if $y \in G$, then $K(y)$ does not converge generally.

Remark:  In particular, this latter result shows the existence
of an uncountable set of points $G$ on the unit circle at which
the Rogers-Ramanujan continued fraction does not converge in the
classical sense to values in $\hat{\mathbb{C}}$.

  In
\cite{BML01} we also showed directly from the definition that
$K(q)$ converges in the general sense at $5m$-th roots of unity,
for $m$ a positive integer. This is in contrast to Schur's result
which showed that $K(q)$ does not converge in the classical sense
at $5m$-th roots of unity.

Remark: The divergence behavior of K(q) at $5m$-th roots of unity is an example
of \emph{Thiele oscillation} and convergence in the general sense can also be deduced
from this Thiele oscillation. However, when we consider the case where $q$ is a point
on the unit circle which is \emph{not} a root of unity,  the continued fraction is no longer
periodic, or even limit-periodic,
 and general results about these types of continued fraction can no longer
be applied.

In \cite{BML02} we were able to show, for each $q$-continued fraction in a wide class,
that there existed an uncountable set of points on the unit circle at which the continued fraction
does not converge in the general sense. However, the proof of this is rather long and technical.
It is a much simpler task to prove the divergence in the classical sense and
in this paper we generalise Theorem  \ref{T:t1} to a wider class
of $q$-continued fractions,  a class which includes the
Rogers-Ramanujan continued fraction and the three
Ramanujan-Selberg continued fractions (first studied by Ramanujan  \cite{R57})
investigated by Zhang in \cite{Z91}.
{\allowdisplaybreaks
\begin{align}\label{z1}
S_{1}(q):= 1 + \frac{q}{1}
\+
 \frac{q+q^{2}}{1}
\+
 \frac{q^{3}}{1}
\+
 \frac{q^{2}+q^{4}}{1}
\+
\cds ,
\end{align}
}
\begin{align}\label{z2}
S_{2}(q):=
1 +
 \frac{q+q^{2}}{1}
\+
 \frac{q^{4}}{1}
\+
 \frac{q^{3}+q^{6}}{1}
\+
 \frac{q^{8}}{1}
\+
\cds ,
\end{align}
and
\begin{align}\label{z3}
S_{3}(q):=
1 +
 \frac{q+q^{2}}{1}
\+
 \frac{q^{2}+q^{4}}{1}
\+
 \frac{q^{3}+q^{6}}{1}
\+
 \frac{q^{4}+q^{8}}{1}
\+
\cds .
\end{align}
Thus,  we show,  for each of these continued fractions,  the
existence of an uncountable set of points on the unit circle at
which the continued fraction does not converge to  finite values.

 \section{Convergence Behavior of $q$-Continued Fractions on the Unit Circle}

Let
\begin{equation}\label{E:cf1}
G(q):=b_{0}(q) +K_{n=1}^{ \infty}\frac{a_{n}(q) }{b_{n}(q)},
\end{equation}
where  $a_{n}(q)  \in  \mathbb{Z}[q]$,  for  $n \geq 1$, $b_{n}(q)
 \in  \mathbb{Z}[q]$,  for  $n \geq 0$,  and $q$ is a complex variable.

 For the remainder of the paper $P_{n}(q)/$ $Q_{n}(q)$
 denotes the $n$-th approximant of $G(q)$,  $P_{n}/Q_{n}$ if there is no danger of ambiguity.
 It is  well known (see   \cite{LW92},  page 9) that,  for $n  \geq 1$,
 { \allowdisplaybreaks
  \begin{align}
\label{reclema}
 P_{n}Q_{n-1}-P_{n-1}Q_{n} &= (-1)^{n-1} \prod_{i=1}^{n}a_{i}.
  \end{align}
 }
  For the continued fraction defined in  \eqref{E:cf1}
 let
 { \allowdisplaybreaks
  \begin{align} \label{E:fn}
  \chi_{n}(q) :=  \prod_{i=1}^{n}a_{i}(q).
  \end{align}}We prove the following theorem.
  \begin{theorem} \label{T:t3}
 Let $G(q)$ be as in  \eqref{E:cf1}.
 Suppose there exist constants $C_{1}$,  $C_{2} > 0$ and positive integers
$j$ and $d$
 such that if $m  \equiv j (\text{mod }d)$ and $x_{m}$ is a primitive $m$-th root of unity,
 then there exists a positive integer $n(m)$,  where $n(m)  \to  \infty$ as
 $m  \to  \infty$,   such that
 { \allowdisplaybreaks
  \begin{align} \label{E:fnm}
  | \chi_{n(m)}(x_{m})|  \geq C_{1}
  \end{align}
}
 and
 { \allowdisplaybreaks
  \begin{align} \label{E:qn}
  \max  \{ \,  \,  \left| Q_{n(m)}(x_{m})  \right|,  \,  \,
  \left| Q_{n(m)-1}(x_{m})  \right| \,  \,   \} \leq C_{2}.
  \end{align}
}
 Then there is an uncountable set of points $Y_{G}$ on the unit circle such that if
 $y  \in Y_{G}$ then $G(y)$ does not converge to a finite value.
  \end{theorem}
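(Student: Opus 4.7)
The plan is to convert the hypotheses \eqref{E:fnm} and \eqref{E:qn} into a uniform lower bound on the distance between two consecutive approximants of $G$ at each of the special roots of unity $x_m$, to spread that bound by continuity to a neighborhood of each $x_m$ on the unit circle, and then to exhibit uncountably many unit-circle points $y$ that fall into infinitely many of these neighborhoods through a Diophantine construction, so that $G(y)$ cannot be Cauchy.

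The opening step is immediate from the determinant identity \eqref{reclema}: at $x_m$,
\[
\left|\frac{P_{n(m)}(x_m)}{Q_{n(m)}(x_m)}-\frac{P_{n(m)-1}(x_m)}{Q_{n(m)-1}(x_m)}\right|\;=\;\frac{|\chi_{n(m)}(x_m)|}{|Q_{n(m)}(x_m)\,Q_{n(m)-1}(x_m)|}\;\geq\;\frac{C_1}{C_2^2},
\]
with the convention that the quantity is $+\infty$ if either denominator vanishes. Since $\chi_{n(m)}$, $Q_{n(m)}$ and $Q_{n(m)-1}$ are polynomials in $q$, I can then find a radius $\delta_m>0$ so that on the disc $|y-x_m|<\delta_m$ the estimates $|\chi_{n(m)}(y)|\geq C_1/2$ and $|Q_{n(m)}(y)|,|Q_{n(m)-1}(y)|\leq 2C_2$ persist, producing a gap of at least $C_1/(8C_2^2)$ between the two approximants for every $y$ in the disc that is not one of the finitely many zeros of $Q_{n(m)}$ or $Q_{n(m)-1}$.

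Next I would mimic the Diophantine construction behind Theorem \ref{T:t1}. Using the recursion $d_i(t)=e_i(t)d_{i-1}(t)+d_{i-2}(t)$, I would inductively choose the partial quotients $e_1(t),e_2(t),\ldots$ of an irrational $t\in(0,1)$ so that (i) for infinitely many indices $i_k$ the denominator $d_{i_k}(t)$ lies in the residue class $j\pmod d$---arranged by a suitable congruence choice of $e_{i_k}$ once $d_{i_k-1}$ has been made coprime to $d$ at a previous stage---and (ii) the next partial quotient $e_{i_k+1}$ is so large that $2\pi/(d_{i_k}d_{i_k+1})<\delta_{d_{i_k}}$. The standard estimate $|t-c_{i_k}/d_{i_k}|\leq 1/(d_{i_k}d_{i_k+1})$ then yields $|y-x_{d_{i_k}}|<\delta_{d_{i_k}}$ for $y:=\exp(2\pi i t)$ and the primitive $d_{i_k}$-th root of unity $x_{d_{i_k}}:=\exp(2\pi i c_{i_k}/d_{i_k})$. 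Consequently, for each such $y$ the two consecutive approximants of index $n(d_{i_k})-1$ and $n(d_{i_k})$ are separated by at least $C_1/(8C_2^2)$; because $n(d_{i_k})\to\infty$, the approximant sequence is not Cauchy and $G(y)$ cannot converge to a finite value. Uncountability of $Y_G$ follows because at each stage $k$ the integer $e_{i_k+1}$ may be chosen from infinitely many values above the required threshold, and different choices yield distinct values of $t$, hence distinct $y$.

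The principal difficulty is the potentially rapid decay of $\delta_m$: the polynomial degrees of $Q_{n(m)}$ and $\chi_{n(m)}$ are unbounded in $m$, so their Lipschitz norms on the unit circle may blow up. Fortunately this is precisely what the Diophantine step absorbs without effort, since $e_{i_k+1}$ may be taken arbitrarily large; only the positivity of $\delta_m$ is needed, and that is automatic from continuity. The remaining tasks---meeting the congruence condition on $d_{i_k}\pmod d$, meeting the size condition on $e_{i_k+1}$, and steering $y$ clear of the countable union of polynomial zeros on the unit circle---are combinatorial bookkeeping and do not interfere with one another.
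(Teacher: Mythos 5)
Your proposal is correct and follows essentially the same route as the paper: a lower bound on the gap between consecutive approximants at the roots of unity $x_m$ via the determinant identity \eqref{reclema}, propagation of that bound to a small arc around $x_m$ (the paper makes your soft continuity step quantitative via explicit Lipschitz constants $\kappa_n$, $\alpha_n$ for the polynomials $Q_n$ and $\chi_n$), and a continued-fraction construction of uncountably many $t$ whose convergents $c_i/d_i$ have denominators in the right residue class with the next partial quotient chosen large enough to absorb the shrinking radii $\delta_m$. The only cosmetic differences are your explicit convention for vanishing denominators (which does occur, e.g.\ $Q_{n(m)-1}(x_m)=0$ for $K(q)$) and your slightly more involved congruence bookkeeping, where the paper simply forces $e_1(t)\equiv j$ and $e_{2i+1}(t)\equiv 0 \pmod d$ so that $d_{2i+1}(t)\equiv j \pmod d$ automatically.
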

 As a corollary to this theorem,  we will show,  for each of the continued fractions
 $K(q)$,  $S_{1}(q)$,  $S_{2}(q)$  and $S_{3}(q)$,  that there exists an uncountable set of points
 on the unit circle at which the continued fraction does not converge to a finite value.

 Let $G(q)$ be as defined in Equation  \ref{E:cf1} and suppose $G(q)$ converges at
 $q = y$ to $L  \in  \mathbb{C}$,  so that
 $ \lim_{n  \to  \infty}P_{n}(y)/Q_{n}(y) = L$.
 { \allowdisplaybreaks
  \begin{align*}
  \left| \frac{P_{n}(y)}{Q_{n}(y)}- \frac{P_{n-1}(y)}
 {Q_{n-1}(y)} \right|& \leq  \left| \frac{P_{n}(y)}{Q_{n}(y)}-L \right|+ \left| \frac{P_{n-1}(y)}
 {Q_{n-1}(y)}-L \right|. \\
  \end{align*}}
 Thus
 { \allowdisplaybreaks
  \begin{align} \label{E:limeq}
  \lim_{n  \to  \infty} \left| \frac{P_{n}(y)}{Q_{n}(y)}-
  \frac{P_{n-1}(y)}{Q_{n-1}(y)} \right| =0.
  \end{align}
}
 We will exhibit an uncountable set of points for which  \eqref{E:limeq} fails to hold,  so
 that $G(q)$ does not converge to a finite value at any of these points.

 We recall the notation introduced before the statement of
Theorem \ref{T:t1}.  Let the regular continued
fraction expansion of any irrational $t  \in (0, 1)$ be denoted by
$t=[0, e_{1}(t), e_{2}(t),  \cdots]$. Let the $i$-th approximant
of this continued fraction expansion be denoted by
 $c_{i}(t)/d_{i}(t)$. We will sometimes write $e_{i}$ for $e_{i}(t)$,  $c_{i}$ for $c_{i}(t)$ etc,
if there is no danger of ambiguity.
The key idea here is to construct real numbers $t$ in the interval $(0, 1)$,
 each of which has the property that the sequence of approximants to its continued fraction
 expansion contains a subsequence of approximants which are ``sufficiently close'' to $t$ in a
 certain precise sense.
Recall that it is possible to construct a real number $t$ for which  the
 $m$-th approximant,  $c_{m}/d_{m}$,  in its continued fraction expansion,  is as close to
 $t$ as desired by making the $m+1$-st partial quotient,  $e_{m+1}$,  sufficiently large.
 We then set $y:= \exp(2  \pi i t)$.

 The purpose in constructing an irrational number $t$ of this form is to exert a certain
 amount of control over the absolute value of some of the terms in the sequence
 $ \{Q_{n}(y) \}_{n=1}^{ \infty}$.
 If $x_{m}:= \exp(2  \pi i c_{m}/d_{m})$, then $y$ and $x_{m}$ are close enough to keep
 $ \chi_{n(d_{m})}(y)$ close to $ \chi_{n(d_{m})}(x_{m})$ and $Q_{n(d_{m})}(y)Q_{n(d_{m})-1}(y)$
 close to $Q_{n(d_{m})}(x_{m})Q_{n(d_{m})-1}(x_{m})$
 for  infinitely many $d_{i}$ in the sequence $ \{d_{m} \}_{m=1}^{ \infty}$.
 Equations  \eqref{E:fnm} and  \eqref{E:qn} will then give that the sequence
 \[
 \left \{\frac{P_{n}(y)}{Q_{n}(y)}-
\frac{P_{n-1}(y)}{Q_{n-1}(y)} \right  \}_{n=1}^{ \infty}
\]
 contains an
 infinite subsequence which is bounded away from $0$, contradicting the requirement at
\eqref{E:limeq} and giving the result.

 Before proving Theorem  \ref{T:t3} it is necessary to prove some technical lemmas.

  \begin{lemma} \label{L:l1}
 Let $G(q)$ be as in  \eqref{E:cf1}.
 There exist strictly increasing sequences of positive integers
 $ \{ \kappa_{n} \}$,  $ \{ \nu_{n} \}$ and $ \{ \lambda_{n} \}$ such that if $x$ and $y$ are any two
 points on the unit circle then,  for all integers  $n  \geq 0$,
 { \allowdisplaybreaks
  \begin{align} \label{E:qdif}
 &|Q_{n}(x)-Q_{n}(y)|  \leq  \kappa_{n}|x-y|;
  \end{align}}
 { \allowdisplaybreaks
  \begin{align} \label{E:pdif}
 |P_{n}(x)-P_{n}(y)|  \leq  \nu_{n}|x-y|;
  \end{align}  }
 and
 { \allowdisplaybreaks
  \begin{align} \label{E:fdif}
 | \chi_{n}(x)- \chi_{n}(y)|  \leq  \alpha_{n}|x-y|.
  \end{align}  }
  \end{lemma}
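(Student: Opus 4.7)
The plan is to establish all three inequalities simultaneously by induction on $n$, using the standard continued fraction recurrences $P_n = b_n P_{n-1} + a_n P_{n-2}$, $Q_n = b_n Q_{n-1} + a_n Q_{n-2}$, together with the product recursion $\chi_n = a_n \chi_{n-1}$. The essential fact I will invoke repeatedly is that every polynomial in $\mathbb{Z}[q]$ is bounded and Lipschitz on the closed unit disk, with easily computable constants.

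First, for each $n \geq 1$ I introduce the four finite quantities $A_n := \max_{|q|=1}|a_n(q)|$, $B_n := \max_{|q|=1}|b_n(q)|$, and the corresponding Lipschitz constants $L_n$, $M_n$ of $a_n$, $b_n$ on the unit disk. I then propagate these through the $Q_n$ recurrence to manufacture a uniform bound $R_n \geq \max_{|q|=1}|Q_n(q)|$ via $R_n := B_n R_{n-1} + A_n R_{n-2}$, and similarly $S_n$ bounding $|P_n|$ from above on the unit circle.

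For the inductive step producing $\kappa_n$, I write
\begin{align*}
Q_n(x) - Q_n(y) &= b_n(x)\bigl[Q_{n-1}(x) - Q_{n-1}(y)\bigr] + \bigl[b_n(x) - b_n(y)\bigr]Q_{n-1}(y) \\
&\quad + a_n(x)\bigl[Q_{n-2}(x) - Q_{n-2}(y)\bigr] + \bigl[a_n(x) - a_n(y)\bigr]Q_{n-2}(y),
\end{align*}
take absolute values, and apply the inductive hypothesis together with the bounds $A_n, B_n, L_n, M_n, R_{n-1}, R_{n-2}$ to arrive at
\[
|Q_n(x) - Q_n(y)| \leq \bigl(B_n \kappa_{n-1} + M_n R_{n-1} + A_n \kappa_{n-2} + L_n R_{n-2}\bigr)|x - y|.
\]
This defines $\kappa_n$ recursively. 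The identical scheme gives $\nu_n$ from the $P_n$ recurrence, and $\alpha_n$ from the identity $\chi_n(x) - \chi_n(y) = a_n(x)[\chi_{n-1}(x) - \chi_{n-1}(y)] + [a_n(x) - a_n(y)]\chi_{n-1}(y)$, yielding $\alpha_n = A_n \alpha_{n-1} + L_n \prod_{i<n} A_i$. To satisfy the requirement that the three sequences consist of strictly increasing positive integers, I round each term up to the next integer and, if needed, replace it by its maximum with the preceding term plus one; these adjustments only enlarge the bounds and therefore preserve the inequalities.

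There is essentially no substantive obstacle here: the lemma is a routine bookkeeping exercise once the polynomial-theoretic bounds $A_n, B_n, L_n, M_n$ and the pointwise bounds $R_n, S_n$ are in place. The only cosmetic subtlety is the strict-monotonicity and integer-valued requirement on the sequences, and that is dispatched by the max-with-predecessor trick described above.
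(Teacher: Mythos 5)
Your proof is correct, but it takes a different route from the paper. The paper's argument is more direct: it observes that $P_n$, $Q_n$ and $\chi_n$ are themselves polynomials in $\mathbb{Z}[q]$, writes a generic such polynomial as $f_n(q)=\sum_{i=0}^{M_n}\gamma_i q^i$, and bounds
\[
|f_n(x)-f_n(y)|\leq \sum_{i=1}^{M_n}|\gamma_i|\,|x^i-y^i|\leq \Bigl(\sum_{i=1}^{M_n}i\,|\gamma_i|\Bigr)|x-y|
\]
using $|x^i-y^i|\leq i|x-y|$ on the unit circle; the constants are then made positive, integral and strictly increasing by exactly the max-with-predecessor adjustment you describe. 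There is no induction through the continued fraction recurrence at all. Your version instead propagates Lipschitz constants and sup-norm bounds through the three-term recurrences $Q_n=b_nQ_{n-1}+a_nQ_{n-2}$ (and the product recursion for $\chi_n$), which requires the extra bookkeeping of $A_n, B_n, L_n, M_n, R_n, S_n$ and, strictly speaking, explicit base cases at $n=0$ and $n=1$ (or $n=-1,0$) which you omit but which are trivial. What your approach buys is constants expressed recursively in terms of the partial numerators and denominators rather than in terms of the coefficients of the (potentially large) polynomials $P_n$, $Q_n$; what the paper's approach buys is brevity, since only the existence of some such constants is ever needed. Either way the lemma holds, and your handling of the strict-monotonicity and integrality requirement matches the paper's.
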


  \begin{proof}
 Let $ \{f_{n}(q) \}$ be any sequence of polynomials in $ \mathbb{Z}[q]$.
 Suppose $f_{n}(q) =  \sum_{i=0}^{M_{n}}  \gamma_{i}q^{i}$,  where the $ \gamma_{i}$'s are in
 $ \mathbb{Z}$. Then
 { \allowdisplaybreaks
  \begin{equation*}
 |f_{n}(x)-f_{n}(y)| \leq
   \sum_{i=1}^{M_{n}}| \gamma_{i}|
   \left|x^{i}-y^{i} \right|
 \leq  \sum_{i=1}^{M_{n}}i \, | \gamma_{i}||x-y|.
   \end{equation*}}
  Now set
  $ \delta_{n} =  \max  \left \{ \,  \,   \sum_{i=1}^{M_{n}}i \, | \gamma_{i}|,  \,  \,  \,  1,
    \,  \,  \delta_{n-1}+1 \right \}$.
   Inequality  \eqref{E:qdif} follows by setting $f_{n}(q)=Q_{n}(q)$ and
   $ \delta_{n}= \kappa_{n}$.
   The result for \eqref{E:pdif} and   \eqref{E:fdif} follow similarly.
    \end{proof}

    \begin{lemma} \label{reclem}
   Let $G(q)$ be as in  \eqref{E:cf1} and $ \chi_{n}(q)$as in  \eqref{E:fn}.
   Then,  for $n  \geq 1$,
   { \allowdisplaybreaks
    \begin{align}
    P_{n}(q)Q_{n-1}(q)-P_{n-1}(q)Q_{n}(q)
   &= (-1)^{n-1} \chi_{n}(q).
    \end{align}}
    \end{lemma}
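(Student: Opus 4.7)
The plan is to observe that this is simply the classical determinant identity (\ref{reclema}) lifted from the numerical to the polynomial setting. Since $a_n(q), b_n(q) \in \mathbb{Z}[q]$, the standard three-term recurrences
\[
P_n(q) = b_n(q)\, P_{n-1}(q) + a_n(q)\, P_{n-2}(q), \qquad Q_n(q) = b_n(q)\, Q_{n-1}(q) + a_n(q)\, Q_{n-2}(q)
\]
hold as identities in $\mathbb{Z}[q]$, with the conventional initialization $P_{-1}=1$, $Q_{-1}=0$, $P_0=b_0(q)$, $Q_0=1$. I would then proceed by induction on $n$.

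For the base case $n=1$, a direct computation gives
\[
P_1 Q_0 - P_0 Q_1 = (b_1 b_0 + a_1)\cdot 1 - b_0 \cdot b_1 = a_1 = (-1)^0 \chi_1(q),
\]
matching the claim. For the inductive step, assume the identity at level $n-1$. Substituting the recurrences into $P_n Q_{n-1} - P_{n-1} Q_n$ causes the $b_n$ terms to cancel, leaving
\[
P_n Q_{n-1} - P_{n-1} Q_n = a_n\bigl(P_{n-2} Q_{n-1} - P_{n-1} Q_{n-2}\bigr) = -a_n \bigl(P_{n-1} Q_{n-2} - P_{n-2} Q_{n-1}\bigr).
\]
By the inductive hypothesis the right-hand side equals $-a_n \cdot (-1)^{n-2} \chi_{n-1}(q) = (-1)^{n-1} a_n(q)\, \chi_{n-1}(q) = (-1)^{n-1} \chi_n(q)$, using the definition $\chi_n(q) = \prod_{i=1}^n a_i(q)$.

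There is essentially no obstacle here: the proof is a verbatim transcription of the classical argument for (\ref{reclema}), with the only noteworthy point being that all arithmetic takes place in the polynomial ring $\mathbb{Z}[q]$ rather than in $\mathbb{C}$. The lemma is recorded separately from (\ref{reclema}) precisely because later we will specialize $q$ to points on the unit circle and wish to invoke the polynomial identity before substitution, so that both sides remain well-defined and equal at every specialization.
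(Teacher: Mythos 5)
Your proof is correct and is just the standard inductive derivation of the classical determinant formula \eqref{reclema}; the paper's own proof is the one-line observation that the lemma follows immediately from \eqref{reclema} (since $\chi_n(q)=\prod_{i=1}^n a_i(q)$ by definition), so you have simply supplied the details the paper takes as known.
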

    \begin{proof}
   This follows from  \eqref{reclema}.
    \end{proof}

   Let $ \left \{  \kappa_{n}  \right \}_{n=1}^{ \infty}$ be as in  \eqref{E:qdif},
   $ \left \{  \alpha_{n}  \right \}_{n=1}^{ \infty}$ be as in  \eqref{E:fdif}, and
   $ \{n(m) \}_{m=1}^{ \infty}$,  $j$ and $d$ as in the statement of
Theorem  \ref{T:t3}.
  Let $S_{0}$ be the set of all
   irrational $t  \in (0, 1)$  satisfying
   { \allowdisplaybreaks
    \begin{align}e_{1}(t) & \equiv j \text{ (mod } d),
    \end{align}}
   and,  for  $i  \geq 1$,
   { \allowdisplaybreaks
    \begin{align}
   e_{2i+1}(t)& \equiv 0  \text{ (mod } d),
    \end{align}}
   and
   { \allowdisplaybreaks
    \begin{align}
   e_{2i+2}(t) & \geq  \frac{2  \pi }{ d_{2i+1}^{2}}  \max
    \left  \{  \displaystyle{ \kappa_{n(d_{2i+1})}},  \,  \,
    \frac{ \displaystyle{2 \alpha_{n(d_{2i+1})}}}{ C_{1}}  \right  \}
    \text{ infinitely often}.
    \end{align}}
   Note that,  for $i  \geq 1$,  $d_{2i+1}(t)  \equiv j  \text{ (mod } d)$, so that
   $ \displaystyle{n(d_{2i+1})}$ is well-defined. Note also that $S_{0}$
is an uncountable set.

    \begin{lemma} \label{L:sk}
   For $t  \in S_{0}$,  we have
    { \allowdisplaybreaks
     \begin{align} \label{E:tdif}
     \left |t - \frac{ c_{2i+1}(t)}{d_{2i+1}(t)}  \right |
    &< \frac{1}{2  \pi  \kappa_{n(d_{2i+1})}}
     \end{align}}
    and
    { \allowdisplaybreaks
     \begin{align} \label{E:aldif}
     \left |t - \frac{ c_{2i+1}(t)}{d_{2i+1}(t)}  \right |
    &< \frac{C_{1}}{4  \pi  \alpha_{n(d_{2i+1})}}
     \end{align}}
    for infinitely many $i$.
     \end{lemma}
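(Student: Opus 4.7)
The plan is to derive both inequalities from a single classical Diophantine estimate for the convergents of a regular continued fraction, and then substitute the lower bound on $e_{2i+2}(t)$ that is built into the definition of $S_{0}$.

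First I would recall the standard inequality that for any irrational $t \in (0,1)$ with convergents $c_{k}(t)/d_{k}(t)$,
\[
\left| t - \frac{c_{k}(t)}{d_{k}(t)} \right| < \frac{1}{d_{k}(t)\, d_{k+1}(t)}.
\]
Combining this with the recurrence $d_{k+1}(t) = e_{k+1}(t)\, d_{k}(t) + d_{k-1}(t) \geq e_{k+1}(t)\, d_{k}(t)$ yields the sharper form
\[
\left| t - \frac{c_{k}(t)}{d_{k}(t)} \right| < \frac{1}{e_{k+1}(t)\, d_{k}(t)^{2}}.
\]
Specializing to $k = 2i+1$ gives
\[
\left| t - \frac{c_{2i+1}(t)}{d_{2i+1}(t)} \right| < \frac{1}{e_{2i+2}(t)\, d_{2i+1}(t)^{2}}.
\]

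Next I would invoke the defining condition of $S_{0}$: for $t \in S_{0}$, the inequality
\[
e_{2i+2}(t) \;\geq\; \frac{2\pi}{d_{2i+1}^{2}}\, \max\!\left\{ \kappa_{n(d_{2i+1})},\; \frac{2\,\alpha_{n(d_{2i+1})}}{C_{1}} \right\}
\]
holds for infinitely many $i$. For each such $i$, substituting the first entry of the maximum into the bound above cancels the two occurrences of $d_{2i+1}^{2}$ and produces \eqref{E:tdif}, while substituting the second entry produces \eqref{E:aldif}. Since both branches of the maximum are available simultaneously, both conclusions hold for the same infinite set of indices $i$.

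The argument is essentially a matter of bookkeeping, and there is no genuine obstacle once the classical convergent estimate is in hand. The only place that needed foresight is the very design of $S_{0}$: the prefactor $2\pi/d_{2i+1}^{2}$ placed in its defining inequality is chosen precisely so that it cancels the $d_{2i+1}^{2}$ in the denominator of the convergent estimate, leaving behind exactly the constants $1/(2\pi\,\kappa_{n(d_{2i+1})})$ and $C_{1}/(4\pi\,\alpha_{n(d_{2i+1})})$ demanded by the lemma and positioning them to be used against \eqref{E:qdif} and \eqref{E:fdif} in the subsequent proof of Theorem~\ref{T:t3}.
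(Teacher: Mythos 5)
Your proposal is correct and follows essentially the same route as the paper: the paper likewise reduces everything to the bound $\bigl|t - c_{2i+1}/d_{2i+1}\bigr| < 1/\bigl(e_{2i+2}\,d_{2i+1}^{2}\bigr)$ (derived there directly from the tail representation $t = (t_{2i+2}c_{2i+1}+c_{2i})/(t_{2i+2}d_{2i+1}+d_{2i})$ rather than by citing the classical convergent inequality) and then substitutes the lower bound on $e_{2i+2}$ built into the definition of $S_{0}$, with both branches of the maximum used for the same infinite set of indices.
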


     \begin{proof}Let $i$ be one of the infinitely many integers for which
     \[
    e_{2i+2}(t)  \geq  \frac{2  \pi }{ d_{2i+1}^{2}}
     \max   \left  \{  \displaystyle{ \kappa_{n(d_{2i+1})}},  \,  \,
     \frac{ \displaystyle{2 \alpha_{n(d_{2i+1})}}}{ C_{1}}  \right  \}
     \]
    and let  $t_{2i+2} = [e_{2i+2}(t), e_{2i+3}(t),  \cdots]$ denote the $2i+2$-th tail of
    the continued fraction expansion for $t$. Then
    { \allowdisplaybreaks
     \begin{align*}
     \left |t - \frac{ c_{2i+1}(t)}{d_{2i+1}(t)}  \right |
    & = \left | \frac{t_{2i+2}c_{2i+1}(t)+c_{2i}(t)}{t_{2i+2}d_{2i+1}(t)+d_{2i}(t)}
    - \frac{ c_{2i+1}(t)}{d_{2i+1}(t)}  \right | \\
&=  \frac{1}{d_{2i+1}(t)(t_{2i+2}d_{2i+1}(t)+d_{2i}(t))} \\
    &< \frac{1}{d_{2i+1}(t)^{2}e_{2i+2}(t)} \leq  \frac{1}{2  \pi  \kappa_{n(d_{2i+1})}}.
     \end{align*}}
     \eqref{E:aldif} follows similarly.
     \end{proof}

     \emph{Proof of Theorem  \ref{T:t3}:} \,  \, Let $t  \in S_{0}$,
    let $y =  \exp(2  \pi i t)$,  and for $i  \geq 1$,  set
    $x_{i} = \exp(2  \pi i c_{2i+1}(t)/d_{2i+1}(t))= \exp(2  \pi i c_{2i+1}/d_{2i+1})$,  so that $x_{i}$ is a
    primitive $d_{2i+1}$-th root of unity,  where $d_{2i+1}  \equiv j$ (mod $d$).
    We use,  in turn,  \eqref{E:qdif},  the fact that chord length is shorter than arc length,
    and  \eqref{E:tdif},  to get that,  for infinitely many $i$,
    { \allowdisplaybreaks
     \begin{align} \label{E:nqm}
    |Q_{n(d_{2i+1})}(y)-Q_{n(d_{2i+1})}(x_{i})| & \leq  \kappa_{n(d_{2i+1})}|y-x_{i}| \\
&< 2  \pi   \kappa_{n(d_{2i+1})}
     \left|t- \frac{c_{2i+1}(t)}{d_{2i+1}(t)} \right| < 1.  \notag
     \end{align}}
    Similarly,
    { \allowdisplaybreaks
     \begin{align} \label{E:nq}
    |Q_{(n(d_{2i+1})-1)}(y)-Q_{(n(d_{2i+1})-1)}(x_{i})|
    & \leq  \kappa_{(n(d_{2i+1}) -1)}|y-x_{i}| \\
&< \frac{  \kappa_{(n(d_{2i+1})-1)}}
    {  \kappa_{n(d_{2i+1})}}< 1.  \notag
     \end{align}}
 The last inequality follows since the sequence $ \{  \kappa_{n}  \}$
 is strictly increasing. Apply the triangle inequality to  \eqref{E:nqm} and
  \eqref{E:nq} and use  \eqref{E:qn} to get that { \allowdisplaybreaks
  \begin{align} \label{E:qnb}
 |Q_{n(d_{2i+1})}(y)| < 1 + C_{2}
  \end{align} }
 and
 { \allowdisplaybreaks
  \begin{align} \label{E:qnb1}
 |Q_{n(d_{2i+1})-1}(y)| < 1 + C_{2}
  \end{align} }
 Similarly,  use  \eqref{E:fdif},  the fact that chord length is shorter than arc length and
  \eqref{E:aldif} to get that,  for the same infinite set of $i$'s,
 { \allowdisplaybreaks
  \begin{align} \label{E:fqm}
 || \chi_{n(d_{2i+1})}(y)|-| \chi_{n(d_{2i+1})}(x_{i})||
 & \leq| \chi_{n(d_{2i+1})}(y)- \chi_{n(d_{2i+1})}(x_{i})|  \\
 & \leq  \alpha_{n(d_{2i+1})}|y-x_{i}| \notag \\
 &< 2  \pi   \alpha_{n(d_{2i+1})} \left|t- \frac{c_{2i+1}(t)}{d_{2i+1}(t)} \right|  \notag \\
 &<  \frac{C_{1}}{2}.  \notag
  \end{align}}
 Use the reverse triangle inequality and  \eqref{E:fnm}to get that
 { \allowdisplaybreaks
  \begin{align} \label{E:fnb}
 | \chi_{n(d_{2i+1})}(y)| > | \chi_{n(d_{2i+1})}(x_{i})|- \frac{C_{1}}{2}  \geq  \frac{C_{1}}{2}.
  \end{align} }
 Use  \eqref{E:qnb},   \eqref{E:qnb1} and  \eqref{E:fnb} to get that
 { \allowdisplaybreaks
  \begin{align*}
  \left| \frac{P_{n(d_{2i+1})}(y)}{Q_{n(d_{2i+1})}(y)}-
  \frac{P_{n(d_{2i+1})-1}(y)}{Q_{n(d_{2i+1})-1}(y)} \right|
 &= \frac{| \chi_{n(d_{2i+1})}(y)|}{|Q_{n(d_{2i+1})}
 (y)Q_{n(d_{2i+1})-1}(y)|} \\
 &> \frac{C_{1}}{2(1+C_{2})^{2}}.
  \phantom{a}
  \end{align*}}
 Since this holds for the infinite set of integers $ \{n(d_{2i+1}) \}_{i=1}^{ \infty}$,
 it follows that $G(y)$ does not converge to a finite value.
 Since $S_{0}$ is an uncountable set,  this proves the theorem.

  \begin{flushright}
 $ \Box$
  \end{flushright}
 Before proving a corollary to this theorem we need the following proposition.
  \begin{proposition}
 Let $q$ be a primitive $2t+1$-th root of unity. Then
 { \allowdisplaybreaks
  \begin{align} \label{E:qprod}
  \prod_{i=1}^{2t}(1+q^{i}) = 1.
  \end{align}}
  \end{proposition}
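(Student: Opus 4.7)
The plan is to exploit the factorization of $x^{2t+1}-1$ over the complex numbers. Since $q$ is a primitive $(2t+1)$-th root of unity, the set $\{1,q,q^2,\dots,q^{2t}\}$ consists of all $(2t+1)$ distinct roots of $x^{2t+1}-1$, so
\[
x^{2t+1}-1=\prod_{i=0}^{2t}(x-q^i).
\]

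Next I would evaluate this identity at $x=-1$. The left side becomes $(-1)^{2t+1}-1=-2$. The right side becomes $\prod_{i=0}^{2t}(-1-q^i)=(-1)^{2t+1}\prod_{i=0}^{2t}(1+q^i)=-\prod_{i=0}^{2t}(1+q^i)$. Equating the two sides yields
\[
\prod_{i=0}^{2t}(1+q^i)=2.
\]

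Finally, I would peel off the $i=0$ factor, which equals $1+q^0=2$, and divide through to obtain $\prod_{i=1}^{2t}(1+q^i)=1$, as required.

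There is no real obstacle here: the argument is a one-line substitution into the cyclotomic-style factorization, and the crucial arithmetic fact $2t+1$ being odd (so $(-1)^{2t+1}=-1$) is built into the hypothesis. The only mild subtlety is remembering to include the $i=0$ factor in the full product before extracting the stated product starting from $i=1$.
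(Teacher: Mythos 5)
Your argument is correct and is essentially the paper's proof: both substitute $x=-1$ into the factorization of $x^{2t+1}-1$ over the roots of unity, the only cosmetic difference being that the paper divides out the $x-1$ factor before substituting while you peel off the $i=0$ factor afterward.
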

  \begin{proof}
 Suppose $q =  \exp(2  \pi i a/(2t+1))$,  for some $a  \in  \mathbb{N}$, where $(a, 2t+1)=1$.
 Then
 { \allowdisplaybreaks
  \begin{align*}
  \prod_{i=1}^{2t}(x-q^{i}) =  \frac{x^{2t+1}-1}{x-1}.
  \end{align*}}
 Let $x = -1$ to get the result.
  \end{proof}
 We now prove the following corollary to Theorem  \ref{T:t3}.
  \begin{corollary}For each of the continued fractions $K(q)$, $S_{1}(q)$,  $S_{2}(q)$ and
 $S_{3}(q)$,  there exists an uncountable set of points on the unit circle at which the
 continued fraction does not converge to a finite value.
  \end{corollary}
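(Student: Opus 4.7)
My plan is to verify the hypotheses of Theorem \ref{T:t3} for each of $K(q), S_1(q), S_2(q), S_3(q)$ in turn, and then simply apply the theorem. For each continued fraction I need to produce positive integers $j, d$, constants $C_1, C_2 > 0$, and an unbounded sequence $n(m)$ so that, at every primitive $m$-th root of unity $x_m$ with $m \equiv j \pmod d$, the bounds \eqref{E:fnm} and \eqref{E:qn} both hold.

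Reading off the partial numerators from \eqref{rreq} and \eqref{z1}--\eqref{z3} gives
\begin{align*}
K(q)&:\ \chi_n(q) = q^{n(n+1)/2},\\
S_1(q)&:\ a_{2k-1} = q^{2k-1},\ a_{2k} = q^k(1+q^k),\\
S_2(q)&:\ a_{2k-1} = q^{2k-1}(1+q^{2k-1}),\ a_{2k} = q^{4k},\\
S_3(q)&:\ a_k = q^k(1+q^k),
\end{align*}
so in each $S_i$ the product $\chi_n(q)$ factors as a monomial in $q$ times a product of factors of the form $(1+q^j)$. For the three $S_i$'s I would take $d = 2$, $j = 1$, so that $m = 2t+1$ is odd, and $n(m) = m-1 = 2t$. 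The proposition gives $\prod_{i=1}^{2t}(1+x_m^i) = 1$, which handles $S_3$ directly: $|\chi_{m-1}(x_m)| = 1$. For $S_1$ and $S_2$ only half of the indices $\{1,\ldots,2t\}$ appear in the product, but since $x_m^{m-j}=\overline{x_m^j}$ the missing factors are the complex conjugates of the present ones, so the modulus-squared product collapses to the same value $1$; thus $|\chi_{m-1}(x_m)| = 1$ in these cases as well. For $K(q)$ there is no $(1+q^k)$-factor to manage and $|\chi_n(x)|=1$ for every $x$ on the unit circle; there I would take $d = 5$ and $j$ any residue coprime to $5$, so that Schur's theorem applies at $x_m$. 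In every case $C_1 = 1$ works.

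The crux is bounding $|Q_{n(m)}(x_m)|$ and $|Q_{n(m)-1}(x_m)|$ by a single constant $C_2$ independent of $m$. At $x_m$ the partial numerators are periodic in $n$, so the convergent vectors $v_n := (Q_n,Q_{n-1})^T$ satisfy $v_{n+m} = T_{x_m} v_n$ for a fixed transfer matrix $T_{x_m}$ whose determinant has modulus $1$. For $K(q)$ the finite-sum representation of the Rogers--Ramanujan polynomials together with Schur's formula \eqref{E:ScM} lets one evaluate $Q_{m-1}(x_m)$ and $Q_m(x_m)$ in closed form and bound them uniformly when $\gcd(m,5)=1$. For each $S_i$, parallel finite identities — derivable from the same root-of-unity cancellations that underlie the proposition — supply the analogous uniform bound. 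Once $C_1, C_2, j, d$ and $n(m)$ are in hand, Theorem \ref{T:t3} immediately produces the uncountable set $Y_G$ on the unit circle at which $G(q)$ fails to converge to a finite value.

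The principal obstacle is the uniformity of $C_2$ in $m$. For a fixed $x_m$, $|Q_n(x_m)|$ typically grows exponentially in $n$ (for example, $|Q_n(-1)|$ is Fibonacci-like), so one can only hope to control it by aligning $n(m)$ exactly with the period $m$, where the finite products simplify through root-of-unity cancellations. Producing the identities that make these cancellations uniform across $m$ — rather than $m$-by-$m$ — is the delicate part of the argument.
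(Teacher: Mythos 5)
You have the right skeleton (verify \eqref{E:fnm} and \eqref{E:qn}, then invoke Theorem \ref{T:t3}), and your treatment of \eqref{E:fnm} is essentially sound: the conjugate-pairing observation $x_m^{m-j}=\overline{x_m^j}$ does reduce the half-products to $\prod_{i=1}^{2t}|1+x_m^i|=1$, which is the same cancellation the paper extracts from its Proposition. But the proof has a genuine gap exactly where you flag it: the bound \eqref{E:qn} is never established. You assert that ``parallel finite identities \ldots supply the analogous uniform bound'' and then concede in your final paragraph that producing these identities is the unresolved ``delicate part.'' That is the entire content of the corollary beyond the easy $\chi$ computation, and the transfer-matrix remark does not help --- knowing $v_{n+p}=T v_n$ with $|\det T|=1$ says nothing about $|Q_n|$ being bounded at the end of one period unless you actually evaluate the matrix entries there.

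The paper closes this gap not by proving new identities but by importing explicit evaluations from Schur \cite{S17} and Zhang \cite{Z91}: at a primitive $m$-th root of unity with $m$ in the right residue class, $Q_{n(m)-1}$ equals $0$ or $1$ and $Q_{n(m)}$ is a root of unity times a power of $q$ (Table \ref{Ta:t5}), so $C_2=1$. This forces specific choices you do not make: $(j,d)=(1,5),(1,4),(1,8),(1,6)$ rather than your $d=2$ for the $S_i$ and ``any residue coprime to $5$'' for $K$ (Schur's evaluation of $Q_{m-1}(x_m)$ depends on $m \bmod 5$, and Zhang's closed forms require the stated congruences). More concretely, your choice $n(m)=m-1$ for $S_1$ and $S_2$ is misaligned: for those fractions $a_{2k-1}$ and $a_{2k}$ give the recurrence period $2m$ in the index $n$, which is why the paper takes $n(m)=2m-1$; at your index $m-1$ the convergents $Q_{m-1}(x_m)$, $Q_{m-2}(x_m)$ have no reason to admit a uniform bound, and no identity in the paper or in the cited literature supplies one. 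So the proposal would need to be repaired by adopting the paper's $(j,d)$, $n(m)$, and the Schur/Zhang evaluations before Theorem \ref{T:t3} can be applied.
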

  \begin{proof}We use some of the information contained in Table  \ref{Ta:t5}.
 { \allowdisplaybreaks
  \begin{table}[ht]
  \begin{center}
  \begin{tabular}{| c | c | c | c | c |}   \hline $G(q)$    & $K(q)$
  & $S_{1}(q)$     &$S_{2}(q)$      &$S_{3}(q)$  \\
    \hline          &      &        &        &   \\
   $(j, d)$           & $(1, 5)$&$(1, 4)$        &$(1, 8)$      & $(1, 6)$  \\
         &      &        &        &   \\
   $n(m)$           & $m-1$  &$2m-1$        &$2m-1$      & $m-1$  \\
       &      &       &         &   \\
       $Q_{n(m)-1}$       & $0$          & $1$        &$1$              & $0$  \\
                 &      &       &         &   \\
   $Q_{n(m)}$       & $q^{(m-1)/5}$& $  (-1)^{(m-1)/4} \,  q^{(m^{2}-1)/8}$ &$q^{(m-1)/2}$ & $q^{(m-1)/3} $  \\
    \hline
    \end{tabular}
    \phantom{asdf} \\
       \caption{} \label{Ta:t5}
       \end{center}
       \end{table}}

      In each case, $q$ is  a primitive $m$-th root of unity and
      $m  \equiv j  \mod{d}$,  where $(j, d)$ is the pair of integers in the
statement of Theorem  \ref{T:t3}.
      The values in the table come from the papers of  Schur  \cite{S17} and
      Zhang  \cite{Z91}.

 For $K(q)$,  take $(j, d)= (1, 5)$ and for $m  \equiv 1$ (mod 5), set $n(m)= m-1$.
      From  \eqref{rreq} we can take $C_{1}=1$ and from Table  \ref{Ta:t5} we can also take
      $C_{2}=1$. This proves the result for $K(q)$.

      For $S_{1}(q)$ take $(j, d)= (1, 4)$ and for $m  \equiv  1$ (mod 4), $m=2 t+1$,
      for some positive integer $t$,  set $n(m)= 2 m-1$.
      Let $x_{m}$ be a primitive $m$-th root of unity. Then,
by  \eqref{E:qprod} and \eqref{z1},
{ \allowdisplaybreaks
 \begin{align*}
| \chi_{n(m)}(x_{m})|= \prod_{i=1}^{2t}|(1+x_{m}^{i})| = 1.
 \end{align*}}
Once again  \eqref{E:fnm} is satisfied with $C_{1}=1$. From  Table
 \ref{Ta:t5} we can once again take $C_{2}=1$. This proves the
result for $S_{1}(q)$.

 For $S_{2}(q)$ take $(j, d)= (1, 8)$ and for $m  \equiv  1$ (mod 8),
 $m=2 t+1$,  for some positive integer
 $t$,   let $n(m)= 2 m-1$. Let $x_{m}$ be a primitive $m$-th root of unity.
 Then, from \eqref{z2},
 { \allowdisplaybreaks
  \begin{align*}
  \left| \chi_{n(m)}(x_{m}) \right|&= \prod_{i=1}^{2t+1} \left|(1+x_{m}^{2i-1}) \right|  \\
 &= \prod_{i=1}^{t} \left|(1+x_{m}^{2i-1}) \right|
  \left|(1+x^{2t+1}) \right| \prod_{i=t+2}^{2t+1} \left|(1+x_{m}^{2i-1}) \right| \\
 &=2 \prod_{i=1}^{t} \left|(1+x_{m}^{2i-1}) \right| \prod_{i=1}^{t} \left|(1+x_{m}^{2i}) \right| \\
 &=2 \prod_{i=1}^{2t} \left|(1+x_{m}^{i}) \right|=2.
  \end{align*}
 }
 The last equality uses  \eqref{E:qprod}. In this case,   \eqref{E:fnm} is satisfied with $C_{1}=2$.
 From  Table  \ref{Ta:t5} we can once again take $C_{2}=1$. This proves the result for $S_{2}(q)$.

 Finally, for $S_{3}(q)$ take $(j, d)= (1, 6)$ and for $m  \equiv  1$ (mod  6),
 $m=2 t+1$,  for some positive integer $t$,   let  $n(m) =m-1$.
 Let $x_{m}$ be a primitive $m$-th root of unity. By   \eqref{E:qprod},
 { \allowdisplaybreaks
  \begin{align*}
 | \chi_{n(m)}(x_{m})|= \prod_{i=1}^{2t}|(1+x_{m}^{i})| = 1.
  \end{align*}}
 Once again  \eqref{E:fnm} is satisfied with $C_{1}=1$. From Table  \ref{Ta:t5},
 it is clear that
 we can take $C_{2}=1$. This proves the result for $S_{3}(q)$.
  \end{proof}

  \section{Concluding Remarks}In proving the existence of an uncountable set of points
on the unit circle
 at which a $q$-continued fraction $G(q)$ does not converge to finite values,
  our methods
 rely on knowing the behavior of
 the continued fraction at roots of unity and,  if $q$ is a primitive$m$-th root of unity,
 on knowing
 something about the values of$| \chi_{n(m)}(q)|$,  $| Q_{n(m)}(q)|$ and $| Q_{n(m)-1}(q)|$
 (see the statement of Theorem  \ref{T:t3}). It would be interesting to have a criterion
based on the
 $a_{n}(q)$ and the $b_{n}(q)$ which would indicate whether  \eqref{E:fnm} and
\eqref{E:qn}were satisfied,
 as this would automatically give information about the convergence
behavior of the continued fraction
 on the unit circle away from roots of unity.

 Our Theorem  \ref{T:t3} can show the existence of a set of
 measure zero on the unit circle at which a $q$-continued fraction does not
converge to finite values.
 However, it seems likely that the particular continued fractions that we have looked at
 diverge almost everywhere, in both the classical and general senses, on the unit circle.
As yet,  we do not see how to prove this almost
 everywhere divergence.

The most famous $q$-continued fraction after the Rogers-Ramanujan continued
 fraction is the G\"{o}llnitz-Gordon continued fraction,
{ \allowdisplaybreaks
 \begin{align} \label{ggcf}
GG(q) :=1+q +  \frac{q^{2}}{1+q^{3}} \+  \frac{q^{4}}{1+q^{5}} \+
 \frac{q^{6}}{1+q^{7}} \+ \,  \cds.
 \end{align}}
  This continued fraction tends to the same limit as $S_{2}(q)$,
 for each $q$ inside the unit circle,  but the behavior at roots of unity is slightly different.
 Based on computer investigations of the behavior of $GG(q)$ at roots of unity,  it would seem that the
 following is true. Both $GG(q)$ and $S_{2}(q)$ agree at $m$-th roots of unity
(either both converge to the same limit or both diverge),
 except if $m  \equiv 2 \text{(mod 4)}$,  in which case $GG(q)$ diverges and $S_{2}(q)$ converges.
 Also,  computer evidence also seems to suggest that $GG(q)$satisfies the conditions on the
 $a_{i}(q)$ and the $Q_{i}(q)$ required by Theorem   \ref{T:t3},   implying that there is an uncountable set
 of points on the unit circle at which  the G\"{o}llnitz-Gordon continued fraction does
not converge to finite values.
 However,  these facts have not yet been proved. We hope to do this in a later paper and thereby
 show that the G\"{o}llnitz-Gordon continued fraction does indeed diverge at uncountably many
 points on the unit circle.

In  \cite{BML02} we extend our results in  \cite{BML01} on the divergence
 in the  \emph{general}  sense (see,  for example,   \cite{J86} and  \cite{LW92})
of the Rogers-Ramanujan
 continued fraction on the unit circle. We show,  for each $q$-continued fraction
 in a certain class of $q$-continued fractions (a class which  includes the Rogers-Ramanujan continued
  fraction and the three Ramanujan-Selberg continued fractions),  that there exists an uncountable
  set of points on the unit circle at which the continued fraction diverges in the general sense.

{ \allowdisplaybreaks
 }

 \end{document}